
\documentclass[letterpaper, 10 pt, conference]{ieeeconf}  

\IEEEoverridecommandlockouts                              
\overrideIEEEmargins

\title{\LARGE \bf
Designing Optimal Personalized Incentive\\ for Traffic Routing using BIG Hype algorithm
}


	\author{Panagiotis D. Grontas,  Carlo Cenedese, Marta Fochesato,\\ Giuseppe Belgioioso,  John Lygeros, Florian D\"orfler
		\thanks{This  work  was supported by NCCR Automation, a National Centre of
Competence in Research, funded by the Swiss National Science Foundation (grant number $180545$).}
		\thanks{Authors are with Automatic Control Laboratory, Department of Electrical Engineering and Information Technology,
        ETH Z\"urich, Physikstrasse 3 8092 Z\"urich, Switzerland. (e-mail:{\tt \{pgrontas, ccenedese, mfochesato, gbelgioioso, jlygeros, dorfler\}@ethz.ch}).
        }
		}

\usepackage[utf8]{inputenc}
\usepackage[T1]{fontenc}
\usepackage{graphicx}
\graphicspath{ {./Images/} }
\usepackage{xcolor}
\usepackage{grffile}
\usepackage{longtable}
\usepackage{wrapfig}
\usepackage{rotating}
\usepackage[normalem]{ulem}
\usepackage{amsmath}
\usepackage{amssymb}
\usepackage{textcomp}
\usepackage{amssymb}
\usepackage{capt-of}
\usepackage{caption}
\usepackage{subcaption}
\usepackage{hyperref}
\usepackage{listings}
\usepackage{float}
\usepackage{optidef}
\usepackage{cite}
\newcommand{\setR}{\mathbb{R}}
\newcommand{\setN}{\mathbb{N}}

\newcommand{\mc}[1]{\mathcal{#1}}
\newcommand{\tup}[1]{\textup{#1}}

\newcommand{\bolds}[1]{\boldsymbol{#1}}
\newcommand{\abs}[1]{\left| #1 \right|}
\newcommand{\norm}[1]{\lVert #1 \rVert}
\newcommand{\proj}[1]{\mathbb{P}_{#1}}

\newcommand{\jac}{\boldsymbol{\mathrm{J}}}

\newcommand{\conserv}{\boldsymbol{\mathcal{J}}}
\DeclareMathOperator{\diag}{diag}

\DeclareMathOperator{\ncone}{N}

\DeclareMathOperator{\sol}{SOL}

\newcommand{\agents}{\mathcal{N}}
\newcommand{\cy}{\bolds{y}}
\newcommand{\cys}{ \cy^{\star} }
\newcommand{\csigma}{\bolds{\sigma}}





\newtheorem{definition}{Definition}

\newtheorem{lemma}{Lemma}

\newtheorem{proposition}{Proposition}

\newtheorem{remark}{Remark}

\usepackage[acronym]{glossaries}
\newacronym{SoC}{SoC}{State of Charge}
\newacronym{PEV}{PEV}{Plug-in Electric Vehicle}
\newacronym{EV}{EV}{Electric Vehicle}
\newacronym{FV}{FV}{fuel vehicle}
\newacronym{TA}{TA}{Traffic Authority}
\newacronym{VRP}{VRP}{Vehicle Routing Problem} 
\newacronym{TTT}{TTT}{Total Travel Time} 
\newacronym{NE}{NE}{Nash equilibrium}
\newacronym{PG}{PG}{Pseudo-Gradient}
\newacronym{BIG Hype}{BIG Hype}{Best Intervention in Games using Hypergradients}
\newacronym{VI}{VI}{Variational Inequality}

\newcounter{algorithm}
\newenvironment{algorithm}[1][]
{	\refstepcounter{algorithm}
	\begin{minipage}{\linewidth}
		\medskip
		\hrule
		\smallskip
		\textsc{Algorithm \thealgorithm}. #1
		\smallskip
		\hrule 
		\smallskip
	\end{minipage}
}
{
	\smallskip
	\hrule width\linewidth\relax
	\smallskip
}

\newcommand{\nodes}{\mc{V}}
\newcommand{\roads}{\mc{E}}

\newcommand{\charge}{\mc{C}^{\textup{c}}}
\newcommand{\park}{\mc{C}^{\textup{p}}}
\begin{document}

\maketitle
\thispagestyle{empty}
\pagestyle{empty}


\begin{abstract}
    We study the problem of optimally routing plug-in electric and conventional fuel vehicles on a city level. In our model, commuters selfishly aim to minimize a local cost that combines travel time, from a fixed origin to a desired destination, and the monetary cost of using city facilities, parking or service stations. The traffic authority can influence the commuters' preferred routing choice by means of personalized discounts on parking tickets and on the energy price at service stations. We formalize the problem of designing these monetary incentives optimally as a large-scale bilevel game, where constraints arise at both levels due to the finite capacities of city facilities and incentives budget. Then, we develop an efficient decentralized solution scheme with convergence guarantees based on \textit{BIG Hype}, a recently-proposed hypergradient-based algorithm for hierarchical games. Finally, we validate our model via numerical simulations over the Anaheim's network, and show that the proposed approach produces sensible results in terms of traffic decongestion and it is able to solve in minutes problems with more than 48000 variables and 110000 constraints. 
\end{abstract}
\section{Introduction}
The problem of optimally manage urban cities is of paramount important in modern society. In fact, the EU alone incurs an annual cost of more than $267$ billion euros \cite{eu:2019:traffic} due to high level of congestion in major cities.
Besides affecting citizens' daily life,  inefficiencies in the  transportation system are responsible for broad  environmental damages, since higher levels of traffic congestion lead to higher CO2 emissions \cite{barth:2009:traffic}. While increasing road capacity or building alternative routes is a traditional approach to managing traffic demand and does offer effective benefits, policymakers and researchers are nowadays exploring other interventions taking advantage of the cities facilities and optimizing the flows of vehicles that daily travel within it by imposing tolling or designing incentives. 

Toward this goal, a popular concept is \textit{congestion pricing} schemes that dates back to \cite{beckmann1956studies} and that propose to \textit{toll} heavily congested roads to influence commuters routing choices, i.e., the associated \gls{VRP}, with the overall objective of decreasing traffic congestion. During the years, a large body of work has been carried out on this topic, often through the lens of game theory showing the potential that taxation (or incentives) have in changing commuters behaviour. Most of the works study the effect that a given pricing scheme produces in terms of network decongestion, see \cite{de2005congestion,de2011traffic} and references therein. 
In \cite{mahnoosh:2017:optimal_pricing_routing_game}, the authors use routing games to analyze the effect that a fleet of \glspl{PEV} has on both the traffic and electricity network showing the importance of considering facilities while studying \gls{VRP} on urban networks. Routing games can also be used to model the effect of 
 \glspl{PEV} in performing load balancing in smart grids \cite{etesami2020smart}.  

Arguably of higher interest is the computation of the optimal set of incentives (or taxes) that must be designed to achieve a beneficial effect in terms of decongestion. In \cite{beckmann1956studies}, the authors use a marginal congestion cost to design tolls, while in \cite{paccagnan2021optimal}, the authors set of tolls that minimizes the system inefficiency. 

These classical results are difficult to generalize to the case in which the \gls{TA} has to meet some limitation such as a desired budget or localized interventions, viz. act only over a limited number of roads/facilities in the network. A natural extension to these more complex setups can be obtained via bi-level games (or optimization problems). If the \gls{TA} aims at designing a set of tolls this problem takes the name of \textit{restricted network
tolling problem} and is NP-hard. The problem is naturally ill-posed and thus the authors rely on heuristic small network sizes \cite{verhoef2002second} or solutions that lack theoretical guarantees of optimality \cite{lawphongpanich2004mpec}. In \cite{paccagnan:2022:data_driven_congestion_pricing} the authors propose a data-driven approach based on the scenario approach to design robust tolls. 
In 
\cite{sohet2021hierarchical,sohet2020coupled}, the authors propose a multi-level optimization problem to compute optimal incentives focusing mostly on \glspl{PEV}. In particular, the presence of \glspl{PEV} allows to design two sets of incentives, one provided by the \gls{TA} and the  other associated to the vehicles smart-charging. Clearly, the design of the two has to be performed holistically. Also in this case the proposed algorithm lacks optimality guarantees. In \cite{decentralized_joint_routing_planning,understanding_the_impact} the authors study the impact of service stations and parking lots on the flows of commuters and show that design incentives for such facilities can be a viable way to influence (indirectly) the \gls{VRP}. 

In summary, a great body of work has been produced in designing tolls to influence the \gls{VRP} that commuters undergo everyday. Yet, due to the inherent problem complexity and lack of scalable algorithms, it is still not clear how to design incentives in the case of limited budget resources and/or of targeted interventions. Moreover, the study on how discounts on facilities influence the \gls{VRP} has been limited. In this work, we aim at bridging both these gaps by exploiting \gls{BIG Hype} (recently introduced in \cite{grontas:2023:big_hype}). This allows us not only to maintain the problem complexity and constraints over the maximum interventions, but also to guarantee (local) optimality of the set of interventions proposed.

The main contributions of this paper can be summarized as follows:
\begin{itemize}
    \item We design a bi-level game that describes the \gls{VRP} in an urban area where commuters have to leave their vehicles at predefined facilities, viz. parking lots and charging stations. The \gls{TA} is able to influence the commuters' routing by providing personalized discounts to access these facilities.
    \item Both levels of the game are subject to constraints, in particular we model both the capacity of the facilities and impose constraints over the maximum discounts that the \gls{TA} can provide. Moreover, we also ensure that the designed policy does not exceed the predefined maximum budget that the \gls{TA} has allocated.
    \item We design an iterative gradient based algorithm that provably converges to the optimal set of personalized incentives, while ensuring that the commuters choice of routing is socially stable, i.e., it is a \gls{NE}.
    \item The cornerstone of the algorithm is \gls{BIG Hype}, thus our algorithm can be implemented in a distributed and highly scalable manner, without requiring common simplifying assumptions, e.g., that the decision-making process undergone by the commuters can be described via a potential game.
    \item We propose numerical studies showing that the proposed algorithm scales gracefully also for urban networks of big dimensions.
\end{itemize}

\section{Problem formulation}
In this section, we formalize the \gls{VRP} in an urban area where a \gls{TA} has the possibility to affect the routing choice of a subset of commuters by designing the  price for accessing some facilities around the city. Namely, it can provide discounts for specific parking lots, and  for the electricity price at charging stations.  Naturally, the former influences those commuters owning either a \glspl{FV} or 
 a \gls{PEV} while the latter only \gls{PEV} owners. We consider facilities in which commuters leave their car for several hours during the day, e.g., the parking lots used during working hours. We assume that the goal of the \gls{TA} is to alleviate traffic congestion, nevertheless the proposed formulation allows to easily change this objective for others such as maximizing the revenues for the facilities, see Remark~\ref{rem:diff_cost_leader}.  The commuters' goal is to arrive at their desired destination via their \gls{PEV} or \gls{FV}. Among them, a subset is reactive to the proposed discounts, and thus take part to the \gls{VRP} since they select their route  minimizing their overall cost. The rest of the population can be modelled as an exogenous demand creating both a constant amount of traffic congestion and occupying always the same facilities independently from their price.


We model the transportation network as a (strongly) connected directed
graph \( \mc{G} \coloneqq (\nodes, \roads) \),
where the nodes \( \nodes \) represent intersections or point of interest, while the edges
\( \roads \subseteq \nodes \times \nodes \) correspond to the roads connecting them.
Given an edge \( \varepsilon \in \roads  \), we denote \( \varepsilon_-, \varepsilon_+ \in \nodes\)
its starting and ending node, respectively. 
The charging stations and parking lots can be located at one of the nodes of the network and are described by the sets  \( \charge \subseteq \nodes \) and $ \park \subseteq \nodes $,  respectively. Is common that at the same location there are both service stations and normal parking, therefore in general   $\charge\cap\park\neq\emptyset $.  
We let $\mathcal C = \charge \cup \park$ \( n_e \coloneqq \abs{\roads} \), \( n_v \coloneqq \abs{\nodes} \), \( n_{\tup  c} \coloneqq \abs{\charge} \), and \( n_{\tup p} \coloneqq \abs{\park} \). 

\subsection{Lower Level: Routing and Charging/ Parking Game}
We group the commuters into \( N \) classes of similar characteristics, 
referred to as \textit{agents} and indexed by the set \( \agents \coloneqq \{1, \ldots, N\} \).
Each agent \( i \in \agents \) represents a population of \glspl{PEV} or \glspl{FV}   composed of $P_i$ vehicles sharing the same origin and destination nodes, denoted by  \( (o_i, d_i) \in \nodes\times \nodes \) where we assume $o_i\neq d_i$. Each driver aims at reaching $d_i$, but has to leave the car at a node $j\in\mc C$ and then complete its trip by travelling from $j$ to $d_i$. Notice that we allow for $d_i\notin \mc C$. 
Each agent $i$ seeks to determine the fraction of vehicles that will park at the lot $\ell\in\park$, denoted by \( g_i^{\tup p,\ell} \in [0, 1] \). If $i$ is composed of \glspl{PEV}, the fraction that will charge at each station \( j \in \charge \),
is denoted by \( g_i^{\tup c,j} \in [0, 1] \), while if the vehicles in $i$ are \glspl{FV} then \( g_i^{\tup c,j}=0\) for all $j\in \mc C^{\tup c}$. For conciseness, we let 
\( g_i^{\tup c} \coloneqq (g_i^{\tup c,j})_{j \in \nodes}\in\setR^{\abs{\nodes}}\), where $g_i^{\tup c,j}=0$ if $j\notin \charge$, similarly we define \(g_i^{\tup p} \coloneqq (g_i^{\tup p,\ell})_{\ell \in \nodes}\in\setR^{\abs{\nodes}} \), and   the collective vectors
 \( \bolds{g}^\tup{c} \coloneqq (g_i^\tup{c})_{i \in \agents},~ \bolds{g}^\tup{p} \coloneqq (g_i^\tup{p})_{i \in \agents} \).

Further, agent \( i \) determines the route towards a charging station or parking lot
via the decision variable \( \phi_i^\varepsilon \in [0, 1] \), that indicates the percentage of vehicles
traversing road \( \varepsilon \in \roads \), so \(\phi_i\coloneqq (\phi_i^\varepsilon)_{\varepsilon\in\mc E},~\bolds{\phi} \coloneqq (\phi_i)_{i \in \agents} \).
To ensure compatibility of vehicle flows we impose, for all \( v \in \nodes \), the constraint:
\begin{equation} \label{eq:flow_compatibility}
	\sum_{ \mathclap{\varepsilon:\varepsilon_+=v} } \phi_i^{\varepsilon} \; - \; \sum_{ \mathclap{\varepsilon:\varepsilon_-=v} } \phi_i^{\varepsilon} =
	\begin{cases}
		-1, ~ & v = o_i, \\
		g_i^{\tup{c},j}+g_i^{\tup{p},j}, ~ &   \text{otherwise}
	\end{cases}.
\end{equation}

The goal of agent \( i \in \agents \) is to choose \( g_i^{\tup{c}} \), \( g_i^{\tup{p}} \) and \( \phi_i \) so as to minimize
its cost function, namely \( f_i \), consisting of three terms: travel time, charging/parking cost,
and last-mile cost.

The travel time of agent \( i \in \agents \) is given by
\begin{equation}
	f_i^{\textrm{t}} \coloneqq \eta_i \sum_{\varepsilon \in \roads} P_i \phi_i^\varepsilon t_{\varepsilon} (\sigma_{\varepsilon}(\bolds{\phi})),
\end{equation}
where \( \eta_i \) is the (monetary) value of time, and
\( \sigma_{\varepsilon}(\boldsymbol{\phi}) \coloneqq \sum_{i \in \agents} P_i \phi_i^{\varepsilon} \) is the aggregate agents'
flow on road \( \varepsilon \).
We denote \( t_{\varepsilon}(\cdot) \) the latency on \( \varepsilon \) as a function of total agents' flow. Similarly to \cite{decentralized_joint_routing_planning,understanding_the_impact}, 
we derive our latency function form the one used by the Bureau of Public Roads and attain the following affine function
\begin{equation}
\label{eq:travel_time_edge}
	t_{\varepsilon}(\sigma_{\varepsilon}(\bolds{\phi})) \coloneqq a_{\varepsilon} + b_{\varepsilon} (h_{\varepsilon} + \sigma_{\varepsilon}(\bolds{\phi})),
\end{equation}
where \( a_{\varepsilon}, b_{\varepsilon} \) are positive constants (see \cite{decentralized_joint_routing_planning}, \cite{understanding_the_impact} for details),
and \( h_{\varepsilon} \) represents the  flow due to vehicles that are not reactive to the discounts or traffic conditions and it is assumed fixed.

The charging cost reads as:
\begin{equation}
\label{eq:cost_chargign}
	f_i^{\text{c}} \coloneqq \sum_{j \in \charge} q_i g_i^{\tup{c},j} (\overline c_j^{\tup{c}} - c_j^{\tup{c},i}),
\end{equation}
where \( q_i \) is the total amount of electricity that all the  \glspl{PEV} in class \( i \)  should purchase to fully charge their batteries,
\( \overline c_j^{\tup{c}}>0 \) is the base price of electricity at station \( j \in \charge \),
whereas \( c_j^{\tup{c},i} \geq 0\) is the discount provided for class \( i \) in station \( j \) by the \gls{TA}. We stress that \( c_j^{\tup{c},i} \) is a design variable of the \gls{TA}. Among the \glspl{PEV} in each population $i\in\mc N$, there can be a small percentage 
$\overline g_i^{\tup{c}}\in[0,1]$ that necessitates to charge during the day due to an initial low state of charge, this translates into the local constraint
\begin{equation}
\label{eq:percentage_need_charge}
  \mathbf{1}^\top g_i^{\tup{c}} \geq \overline g_i^{\tup{c}}.
\end{equation}

Similarly, the cost of parking reads as 
\begin{equation}
\label{eq:cost_parking}
	f_i^{\text{p}}  \coloneqq \sum_{j \in \park}  g_i^{\tup{p},j} (\overline c_j^{\tup{p}} - c_j^{\tup{p},i}),
\end{equation}
where the cost of parking at $j\in\park$ and the associated discount for agent $i$ are denoted by $\overline c_j^{\tup{p}},~ c_j^{\tup{p},i}$, respectively. We denote the vector of all the discounts provided by $\bolds c\coloneqq(c^{i})_{i\in\agents}\in\setR^m$ and \( m \coloneqq N (n_{\tup c}+ n_{\tup p})  \), while $c^{i}\coloneqq(c_j^{i})_{j\in\mc C}$, and
$c_j^{i}\coloneqq(c_j^{\tup{c},i},c_j^{\tup{p},i})$; we define $\bolds {\overline c}$ similarly. 

Finally,  each agent \( i \in \agents \) encounters a cost for travelling from the   charging station/ parking $j\in\mc C$ (where they left their car) to their destination $d_i$, the last-mile cost. To model this, let us define a vector $\hat g_i\in\setR^{|\nodes|}$ as a vector of all $0$s except for its $d_i$-th component that is equal to $1$. 
Then, the last-mile cost is defined as
\begin{equation}
	f_i^{\text{lm}} \coloneqq \eta_i \norm{  g_i^{\tup{p}}+g_i^{\tup{c}} - \hat{g}_i}^2_W,
\end{equation}
where \( W \succ 0 \) is a diagonal matrix with positive elements. The above cost  is zero if all the vehicles in $i$ leave their vehicle at $d_i$. 
The weights in $W_{jj}$ represents the discomfort that agent $i$ faces for travelling to $d_i$ from the facility $j\in \mc C$. There are several ways to model $W$ and depend not only on the particular city's structure, but also on the different means of transportation that can be used to cover the last-mile trip. Hereafter, we assume that the discomfort is proportional to the time required to move from $j$ to $d_i$ in free-flow conditions via the shortest path. 
Notice that from \eqref{eq:travel_time_edge}, it follows that  the time necessary to travel through road $\varepsilon\in\mc E$ in free-flow conditions is $a_\varepsilon>0$. Moreover, we impose $W_{d_id_i}=\epsilon>0$ where $\epsilon$ is a small scalar that ensures $W\succ0$.  

To guarantee that each agent has the possibility to access the selected facility, we introduce a constraint over the maximum number of vehicles of class $i$ that can use $j\in\mc C$. 
For every $j\in\charge$, we denote the facility maximum number of slots allocated a priori for agent $i$ by the \gls{TA} by $\delta_i^{\tup{c},j}>0$, thus the agents decisions must satisfy
\begin{equation}
    \label{eq:capacity_charge_follow}
     P_ig_i^{\tup{c},j} \leq \delta_i^{\tup{c},j}.
\end{equation}
Analogously, we define the same constraint for the parking facilities as follows 
\begin{equation}
    \label{eq:capacity_park_follow}
     P_ig_i^{\tup{p},j} \leq \delta_i^{\tup{p},j}.  
\end{equation}

We denote the decision variable of each agent as \( y_i \coloneqq (\phi_i, g_i^{\tup c},g_i^{\tup p}) \in \setR^{n_i} \),
constrained to the local feasible set  
\begin{equation}
    \mc{Y}_i \coloneqq \{ y_i \in [0, 1]^{n_i} \, | \, \eqref{eq:flow_compatibility},\,\eqref{eq:percentage_need_charge},\,\eqref{eq:capacity_charge_follow},\,\eqref{eq:capacity_park_follow} \text{ hold} \} ,
\end{equation} 
where \( n_i = n_e + 2n_v \).
We let \( \bolds{y} \coloneqq ((y_i)_{i \in \agents}) \in \mc{Y} \) be the collective strategy profile of all agents,
where \( \mc{Y} \coloneqq \prod_{i \in \agents} \mc{Y}_i \subseteq \setR^n \) and \( n \coloneqq \sum_{i \in \agents} n_i \).
Agent \( i \in \agents \) is faced with the optimization problem
\begin{equation} \label{eq:lower_level_single}
	\begin{alignedat}{2}
		&\underset{\displaystyle y_i \in \mc{Y}_i}{\mathclap{\mathrm{minimize}}} 
		\quad~ && \quad f_i(\bolds c, y_i, \bolds{\sigma}(\cy)), 
	\end{alignedat} \tag{\( \mc{P}_i \)}
\end{equation}
where \( f_i \coloneqq f_i^{\textrm{t}}  + f_i^{\text{c}} + f_i^{\text{p}} + f_i^{\text{lm}}  \) is the sum of  the objective functions, and 
  \( \bolds{\sigma}(\cy) \coloneqq ((\sigma_\varepsilon)_{\varepsilon \in \roads}) \) is the aggregative agents' flow over $\mc G$.
Observe that \( f_i \) depends both on the lower-level variables \( y_i \) and the aggregative quantity \( \bolds{\sigma}(\cy) \),
owned by the agents \( i\in\agents \). Moreover, it also depends on the upper-level variable \( \bolds c \), controlled by the \gls{TA}, thus \eqref{eq:lower_level_single} is an aggregative parametric game.

\subsection{Upper Level: Personalized Incentive Design}
Next, we describe the objective of the \gls{TA} that is assumed here to be interested into minimizing the traffic congestion over the network or on relevant parts of it.
As anticipated, it can design a set of personalized  incentives, viz. $\bolds c$ , for each agent $i$ and facility $j\in\mc C$. Therefore, the the \gls{TA} aims at minimizing the \gls{TTT} that reads as 
\begin{equation}
\label{eq:g_c}
	\varphi(\bolds c, \csigma(\cy)) \coloneqq 
	\sum_{\varepsilon \in \roads_D} (h_\varepsilon + \sigma_{\varepsilon}(\bolds{\phi}))
	(a_\varepsilon + b_\varepsilon (h_\varepsilon + \sigma_{\varepsilon}(\bolds{\phi})))
\end{equation}
where \( \roads_D \subseteq \roads \) corresponds to a subset of the road network
that the \gls{TA} is interested in decongesting, e.g., the city center.
For example since $2007$ the city of Z\"urich uses a perimeter control scheme to limit the congestion in the city center while allowing for higher levels of congestion in other areas \cite{ambuhl:2018:zurich_mfd}.
Naturally, if the \gls{TA} aims at decongesting the entire network it would set \( \roads_D = \roads \).
It is important to highlight that, the dependence of \( \varphi \) on \( \bolds c \) is implicit, i.e., the discounts \( \bolds c \) shape the traffic pattern \( \bolds{\phi} \) which, 
in turn, determines the \gls{TTT} over $\roads_D$.

\begin{remark}[Different cost functions]\label{rem:diff_cost_leader}
    The same problem formulation described above can be easily modified to address other scenarios in which the \gls{TA} is replaced by a private entity, e.g., the owner of the facilities, that aims at maximizing the revenue attained. In such a case, \eqref{eq:g_c} should be simply replaced by 
    $\tilde \varphi = - \sum_{i\in\mc N} (f^{\tup c}_i+f^{\tup p}_i)$.
    In this case, the goal of the \gls{TA} is to minimize the provided discount while maximizing the commuters that use the most costly facilities. The iterative scheme based on \gls{BIG Hype} discussed in Section~\ref{sec:big_hype} can be used to compute the optimal set of discounts for this alternative formulation with minor adjustments.
\end{remark}

To avoid large discrepancies in facility prices for different agents,
we restrict the personalized discounts to the feasible set
\( \mc{D} \coloneqq \{ \bolds c \in \setR^{m} \, | \,  c^i_j \in [0,\, \theta^i_j\overline c^i_j] \} \),
where \( \theta^i_j \in (0,1)\). 
Further, we assume that the \gls{TA} has to compensate the facilities for the provided discounts, 
but can only spend a limited budget \( C > 0 \).
Concretely, the \gls{TA}'s budget constraint reads
\begin{equation} \label{eq:budget_constraint}
	C^{\tup b}(\bolds c,\bolds y)\coloneqq \sum_{i \in \agents} \left(\sum_{j \in \charge} q_i g^{\tup c,j}_i c^{\tup c,j}_i+ \sum_{\ell \in \park} P_ig^{\tup c,\ell}_i c^{\tup c,\ell}_i \right) \leq C.
\end{equation}
For computational reasons, we model \eqref{eq:budget_constraint} as a soft constraint via
the penalty function
\begin{equation}
	\varphi^{\text{b}}(\bolds c,\cy) \coloneqq \max \left\{ C^{\tup b}(\bolds c,\bolds y)  - C, 0 \right\}^2.
\end{equation}
Hence, the \gls{TA}'s objective is $$ \varphi_{\text{TA}}(\bolds c,\cy) \coloneqq \varphi(\bolds c,\csigma) + \mu \varphi^{\text{b}}(\bolds c,\cy) ,$$
where \( \mu > 0 \) is a penalty parameter used to ensure that the final \gls{TA}'s discounts require satisfy \eqref{eq:budget_constraint}.  

\subsection{Bilevel Game Formulation}
Overall, our considered traffic model operates as follows.
The \gls{TA} broadcasts the personalized incentives \( \bolds c \) to the commuters in $\agents$.
Then, the latter respond by choosing the  routing and facilities 
that solve the collection of interdependent optimization problem \( \mc{L}(\bolds c) \coloneqq \{ \text{\eqref{eq:lower_level_single}} \}_{i \in \agents} \)
which constitute a \textit{parametric Nash game}, with parameter $\bolds c$.

A relevant solution concept for \( \mc{L}(\bolds c) \) is the Nash equilibrium that corresponds to a strategy
profile where no agent can reduce its cost by unilaterally deviating from it.
\begin{definition}
	Given \( \bolds c \), a strategy profile \( \cys \in \mc{Y} \) is a \gls{NE}
	of \( \mc{L}(\bolds c) \) if, for
	all \( i \in \agents \):
	\begin{equation*}
		f_i(\bolds c , y_i^{\star}, \csigma(\cys)) \leq f_i(\bolds c, y_i, \csigma((y_i,\cys_{-i}))), ~ \forall y_i \in \mc{Y}_i. \text{\tag*{ $\square$ }}
	\end{equation*}
\end{definition}

In our setting, \( \cys \) is a \gls{NE} if and only if it is a solution to a specific \gls{VI} problem \cite[Prop.\ 1.4.2]{facchinei2003finite}, 
namely, if it satisfies:
\begin{equation} \label{eq:ne_is_vi}
	F(\bolds c, \csigma(\cys))^{\top} (\cy - \cys) \geq 0, \quad \forall \cy \in \mc{Y},
\end{equation}
where \( F(\bolds c, \csigma(\cy)) \coloneqq (\nabla_{y_i} f_i(\bolds c, y_i, \bolds \sigma(\cy)))_{i \in \agents} \) is the so-called \gls{PG} mapping.
We denote \( \sol(F(\bolds c, \cdot), \mc{Y}) \) the set of solutions to \( \mc{L}(\bolds c) \).
Next, we prove existence and uniqueness of a \gls{NE}. 

\begin{lemma} \label{lemma:existence_and_uniqueness}
	For any fixed \( \bolds c \in \mc{D} \), the parametric game \( \mc{L}(\bolds c) \) admits a unique \gls{NE}.
	{\hfill \( \square \)}
\end{lemma}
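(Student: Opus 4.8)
The plan is to exploit the equivalence between the \gls{NE} of $\mc L(\bolds c)$ and the solution set $\sol(F(\bolds c,\cdot),\mc Y)$ of the \gls{VI} in \eqref{eq:ne_is_vi}, already recorded, and to prove separately that this set is nonempty (existence) and a singleton (uniqueness). First I would verify that $\mc Y=\prod_{i\in\agents}\mc Y_i$ is nonempty, convex and compact: each $\mc Y_i$ is the intersection of the box $[0,1]^{n_i}$ with the affine constraints \eqref{eq:flow_compatibility}--\eqref{eq:capacity_park_follow}, hence a bounded polytope; nonemptiness is a standing well-posedness requirement guaranteed by strong connectivity of $\mc G$ together with enough facility capacity to accommodate the demand $P_i$. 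Next I would check that each $f_i$ is continuous and convex in $y_i$ for fixed $\bolds c$ and $\cy_{-i}$: $f_i^{\textrm t}$ is convex quadratic in $\phi_i$ (diagonal Hessian $2\eta_i P_i^2 b_\varepsilon$), $f_i^{\text c},f_i^{\text p}$ are linear, and $f_i^{\text{lm}}$ is convex since $W\succ0$. Compactness and convexity then yield existence of a \gls{VI} solution by a standard argument \cite{facchinei2003finite}, so the only real work is uniqueness.

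For uniqueness, the key observation is that $\mc L(\bolds c)$ is a \emph{weighted potential game} with weights $1/\eta_i$. I would introduce
\[
\Theta(\cy)=\sum_{\varepsilon\in\roads}\Big[(a_\varepsilon+b_\varepsilon h_\varepsilon)\sigma_\varepsilon+\tfrac{b_\varepsilon}{2}\sigma_\varepsilon^2+\tfrac{b_\varepsilon}{2}\sum_{i\in\agents}P_i^2(\phi_i^\varepsilon)^2\Big]+\sum_{i\in\agents}\Big[\tfrac{1}{\eta_i}(f_i^{\text c}+f_i^{\text p})+\norm{g_i^{\tup p}+g_i^{\tup c}-\hat g_i}^2_W\Big],
\]
for which direct differentiation gives $\nabla_{y_i}\Theta=\eta_i^{-1}\nabla_{y_i}f_i$ for every $i$. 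Since $\mc Y$ has product structure, the per-agent optimality conditions of $\mc L(\bolds c)$ aggregate exactly into the first-order condition of $\min_{\cy\in\mc Y}\Theta(\cy)$, so the \gls{NE} set coincides with $\argmin_{\cy\in\mc Y}\Theta$. The Hessian of $\Theta$ is block diagonal between the $\bolds\phi$ and the $(\bolds g^{\tup c},\bolds g^{\tup p})$ variables; the routing block decouples over edges into $b_\varepsilon\big(pp^\top+\diag(p\odot p)\big)$ with $p=(P_i)_{i\in\agents}$, which is positive definite, whereas the facility block $\blkdiag_{i\in\agents}2\!\begin{bmatrix}W&W\\W&W\end{bmatrix}$ is only positive semidefinite.

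Strict convexity of $\Theta$ in $\bolds\phi$ forces every minimizer of the convex program $\min_{\mc Y}\Theta$ to share the same $\bolds\phi^\star$. The flow-compatibility constraint \eqref{eq:flow_compatibility} then pins down the node sums $g_i^{\tup c,v}+g_i^{\tup p,v}$ for all $v$, so $g_i^{\tup c}+g_i^{\tup p}$, and hence the travel-time and last-mile terms, are identical across all minimizers. At facilities $v\notin\charge\cap\park$ this fixes the individual variables (one of $g_i^{\tup c,v},g_i^{\tup p,v}$ is zero by definition), leaving as the sole residual freedom the charge/park split of a fixed mass at shared nodes $v\in\charge\cap\park$. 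Among minimizers this split is governed only by the residual linear term $\eta_i^{-1}[q_i g_i^{\tup c,v}(\overline c_v^{\tup c}-c_v^{\tup c,i})+g_i^{\tup p,v}(\overline c_v^{\tup p}-c_v^{\tup p,i})]$, whose minimization over the fixed-sum segment is attained at the extreme point selecting the cheaper option, and is unique whenever the two net prices differ.

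The main obstacle is precisely this last step: because $f_i^{\text{lm}}$ depends on $g_i^{\tup c},g_i^{\tup p}$ only through their sum and the facility costs are linear, the mapping $F$ is merely monotone and $\Theta$ fails to be strictly convex in the facility variables, so uniqueness of the split cannot be extracted from monotonicity and must instead rest on the strict ordering of the net facility prices at shared nodes --- a mild genericity condition that is automatically met for \gls{FV} classes, where $g_i^{\tup c}=0$ and $\norm{g_i^{\tup p}-\hat g_i}^2_W$ is already strictly convex. A secondary technical point is establishing nonemptiness of $\mc Y$, which I would settle by invoking strong connectivity of $\mc G$ and capacity budgets large enough to route each demand $P_i$.
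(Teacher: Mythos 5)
Your argument is essentially correct, but it takes a genuinely different route from the paper, and the comparison is instructive. The paper works through the variational-inequality characterization \eqref{eq:ne_is_vi}: since \( F(\bolds c,\cdot) \) is affine, strong monotonicity is equivalent to positive definiteness of \( \jac F(\bolds c,\cdot) \), which after a permutation is block diagonal, \( M=\diag(M_\phi,M_g) \); it cites prior work for \( M_\phi\succ0 \), asserts \( M_g\succ0 \) on the grounds that \( f_i^{\text c}+f_i^{\text p}+f_i^{\text{lm}} \) is a strongly convex quadratic in \( g_i \), and then invokes \cite[Th.~2.3.3(b)]{facchinei2003finite} to get existence and uniqueness in one stroke. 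Your weighted-potential construction reaches the same conclusion for the routing variables (your per-edge Hessian \( b_\varepsilon(pp^\top+\diag(p\odot p))\succ0 \) is exactly the content of \( M_\phi\succ0 \)), so the two arguments agree there; the difference is entirely in how the facility variables are treated, and there your analysis is sharper than the paper's.

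Indeed, the paper's claim that \( f_i^{\text c}+f_i^{\text p}+f_i^{\text{lm}} \) is strongly convex in \( (g_i^{\tup c},g_i^{\tup p}) \) is false whenever agent \( i \) is a PEV class and \( \charge\cap\park\neq\emptyset \): the charging/parking costs are linear and \( f_i^{\text{lm}} \) depends only on the sum \( g_i^{\tup c}+g_i^{\tup p} \), so the facility Hessian is \( 2\eta_i\bigl[\begin{smallmatrix}W&W\\ W&W\end{smallmatrix}\bigr] \), singular along the swap directions \( (d,-d) \) supported on shared nodes --- precisely the degeneracy you isolate. Consequently \( M_g\succeq0 \) only, \( F(\bolds c,\cdot) \) is merely monotone, and the lemma as stated can actually fail: if at a shared node \( j \) the net prices tie, i.e.\ \( q_i(\overline c_j^{\tup c}-c_j^{\tup c,i})=\overline c_j^{\tup p}-c_j^{\tup p,i} \), and an equilibrium has \( g_i^{\tup c,j}>0 \), \( g_i^{\tup p,j}>0 \) with \eqref{eq:percentage_need_charge}--\eqref{eq:capacity_park_follow} slack, then shifting mass between charging and parking at \( j \) leaves every constraint, the aggregate flow, and every agent's cost unchanged, producing a continuum of equilibria. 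So the ``main obstacle'' you flag is not a deficiency of your proof relative to the paper's; it is a gap in the paper's own argument, which can only be closed by your genericity condition (distinct net prices at shared nodes), or by assuming \( \charge\cap\park=\emptyset \) --- which happens to hold in the paper's simulations, where \( \charge=\{0,14\} \) and \( \park=\{8,20\} \).
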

\begin{proof}
	See Appendix \ref{proof:existence_and_uniqueness}.
\end{proof}

In view of \autoref{lemma:existence_and_uniqueness}, we can define the single-valued
parameter-to-NE mapping \( \cys(\cdot) : \bolds c\mapsto \sol(F(\bolds c, \cdot), \mc{Y}) \),
and use to it express the \gls{TA}'s problem as follows
\begin{subequations} \label{eq:upper_level_substituted}
	\begin{alignat}{2}
		&\underset{\displaystyle x}{\mathclap{\mathrm{minimize}}} 
		\quad~ && \quad \varphi_{\text{TA}}(\bolds c, \cys(\bolds c)) =: \hat{\varphi}_{\text{TA}}(\bolds c), \\
		& \overset{\hphantom{\displaystyle x}}{\mathclap{\mathrm{subject~to}}} \quad~
		&& \quad \bolds c\in \mc{D}.
	\end{alignat}
\end{subequations}
The dependence of \( \varphi_{\text{TA}} \) on \( \cys(\cdot) \) highlights that the \gls{TA} anticipates the rational response of the \glspl{PEV} to \( \bolds c \).

Unfortunately, the implicit nature of \( \cys(\cdot) \) renders \( \hat{\varphi}_{\text{TA}} \) non-smooth and non-convex \cite[Section IV]{grontas:2023:big_hype}.
We can find a globally optimal solution to \eqref{eq:upper_level_substituted} by recasting it as mixed-integer program and using
off-the-self software to solve it, as in \cite{incentive_stackelberg}.
The resulting computational complexity, however, drastically increases with problem size \cite[Section V-B.3]{grontas:2023:big_hype} making them not suitable to solve the problem at hand.
For this reason, in the next section we adopt \gls{BIG Hype} that focuses on obtaining local solutions in a very efficient manner.


\addtolength{\textheight}{-3cm}   

\section{Optimal discount computation via \gls{BIG Hype}}\label{sec:big_hype}
\begin{figure}[tb]
	\centering
	\includegraphics[trim={200 50 200 50},clip,width=\linewidth]{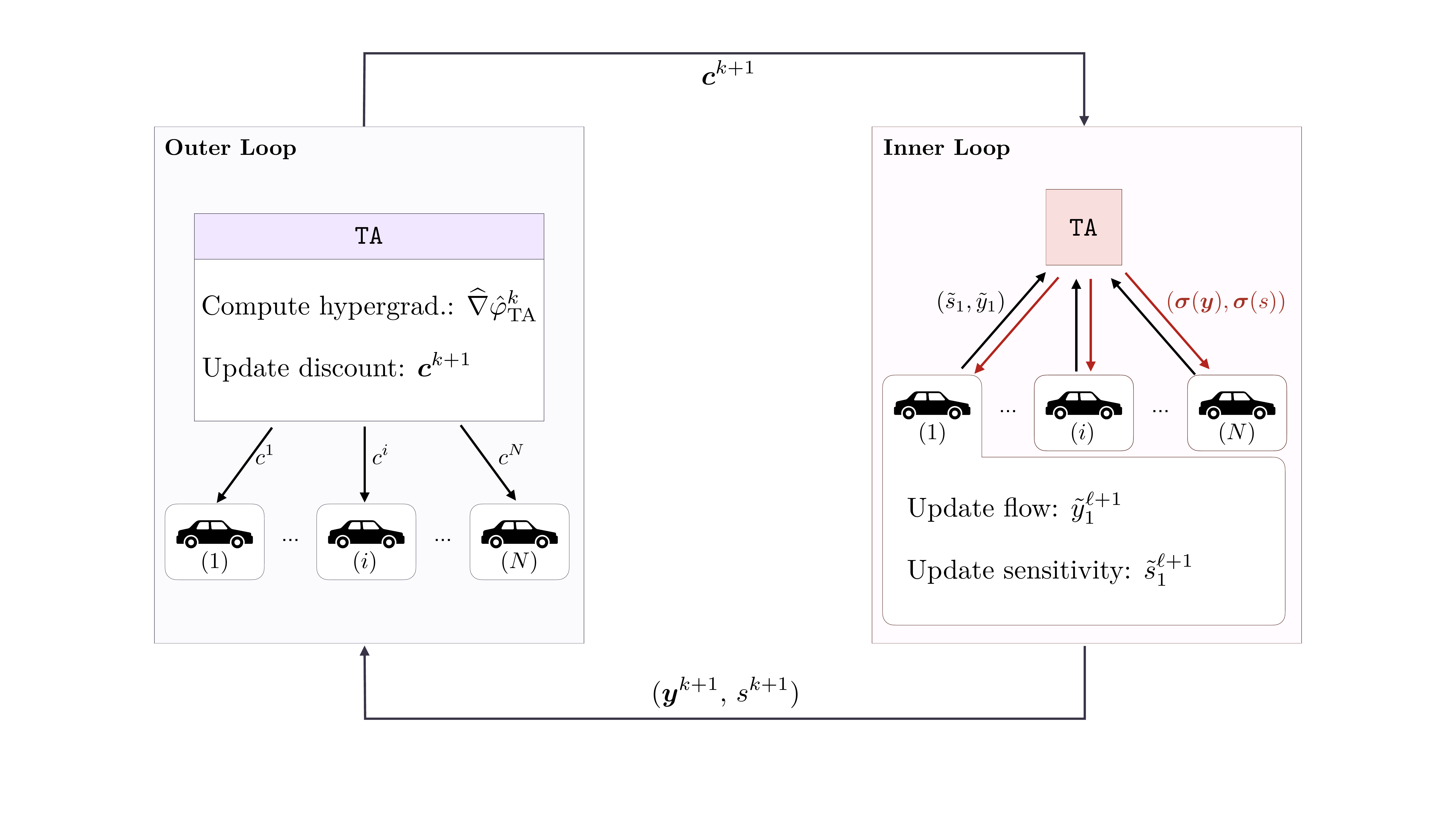}
	\caption{Schematic representation of the outer and inner loop in \autoref{alg:outer_loop} used to compute the optimal discount $\bolds c^\star$. During every outer iteration $k$ the inner loop is iterated until $ \max \big\{\|\tilde{\cy}^{\ell}\! -\! \tilde{\cy}^{\ell-1} \|, \|\tilde{s}^{\ell}\! -\! \tilde{s}^{\ell-1} \| \big\} \leq \sigma$. }
	\label{fig:traffic_graph}
\end{figure}
Bilevel games like \eqref{eq:upper_level_substituted} are notoriously difficult to solve due to the pathological lack of smoothness and convexity.
Further, we are interested in scalable solution methods that can exploit the inherent hierarchical and distributed nature of the game, such as \cite{grontas:2023:big_hype} and \cite{local_stack_seeking}.

In this work, we employ the novel \gls{BIG Hype} algorithm that has been firstly developed in \cite{grontas:2023:big_hype} for a wider class of bilevel games. It requires  weak assumptions on the upper level  objective, i.e., the \gls{TA}'s objective $\varphi_{\text{TA}}$ can be designed in a general form,
and it utilizes simple update rules that can be easily implemented. Moreover, it preserve the distributed structure of the agents' problem allowing to design  a scheme that is efficient also for a great number of agents, see Section~\ref{sec:sims}.
In its core, \gls{BIG Hype} uses projected gradient descent to obtain a local solution of \eqref{eq:upper_level_substituted}.
Informally, the gradient of \( \hat{\varphi}_{\text{TA}} \), commonly referred to as the hypergradient, can be characterized using the chain rule as follows
\begin{equation*}
	\nabla \hat{\varphi}_{\text{TA}}(\bolds c) = \nabla_{\bolds c} \varphi_{\text{TA}}(\bolds c, \cys(\bolds c)) + \jac \cys(\bolds c)^{\top} \nabla_{\cy} \varphi_{\text{TA}}(\bolds c, \cys(\bolds c)).
\end{equation*}
To compute \( \nabla \hat{\varphi}_{\text{TA}}(\bolds c) \), the \gls{TA} requires knowledge of \( \cys(\bolds c) \)
as well as its Jacobian \( \jac \cys(\bolds c) \), which is known as the \textit{sensitivity} and,
intuitively, represents how the commuters react  to a marginal change in the discounts $\bolds c$.
\begin{remark}
    Technically, \gls{BIG Hype} computes the so-called \textit{conservative gradient} of \( \hat{\varphi}_{\text{TA}} \),
    denoted by \( \conserv \hat{\varphi}_{\text{TA}} \),
    which is a generalization of the gradient for non-smooth and non-convex functions \cite{conservative_definition}.
\end{remark}

The proposed hierarchical traffic-shaping scheme, derived from \gls{BIG Hype}, is presented in \autoref{alg:outer_loop}, and consists 
of two nested loops, that we describe in the following using also the aid of the scheme in Figure~\ref{fig:traffic_graph}.
The \gls{TA} sends the current personalized discount \( c^i \) to each agent $i$ computed during the outer loop iteration $k$.
Then, during each iteration $\ell$ of the inner loop, see \autoref{alg:inner_loop},
the commuters estimate their routing and facility choice \( \tilde{y}_i^{\ell+1} \in \setR^{n_i} \) along with their
sensitivity \( \tilde{s}_i^{\ell+1} \in \setR^{n_i \times m} \) using the aggregative quantities $\csigma(\tilde{\cy}^{\ell})$, $\csigma(\tilde{s}^{\ell})$ broadcast at the end of each iteration $\ell$ by the \gls{TA}. 
The inner loop terminates once the estimates are sufficiently accurate without, however, requiring an exact evaluation
of \( \cys(\bolds c) \) and \( \jac \cys(\bolds c) \).
Then, in the outer loop, the \gls{TA} gathers the approximate \gls{NE} and its sensitivity, \( \cy^{k+1} \) and \( s^{k+1} \), 
which are used to update \( \bolds c\) via a projected hypergradient step, where $\widehat \nabla$ denotes the attained inexact hypergradient.
\begin{remark}[Agents' decision update]
	To update the flow profile as in \autoref{alg:inner_loop}, the agents need to project onto the polyhedron \( \mc{Y}_i \).
	This corresponds to repeatedly solving a parametric quadratic program, which can be performed efficiently
	using appropriate solvers, such as \cite{osqp}.
	On the other hand, the TA's feasible set is a box and, hence, the projection onto it can be performed analytically.
\end{remark}
\begin{remark}[Agents' sensitivity update]
	The sensitivity updates step of \autoref{alg:inner_loop} requires that each agent $i$ computes the auxiliary matrices \( S_{1,i} \),  \( S_{2,i} \), and \( S_{3,i} \),
	that store the partial Jacobian of the mapping \( \proj{\mc{Y}_i}[y_i - \gamma F(\bolds c , \csigma(\cy))]  \) with respect to $\bolds c$, $y_i$, and $\csigma$, respectively. 
	This computation is non-trivial as it requires differentiating through the projection operator \( \proj{\mc{Y}_i} \).
	A theoretical and numerical study of this problem was presented in \cite{optnet_arxiv}. Moreover, the aggregative structure of the agents' game \eqref{eq:lower_level_single} makes the sensitivity $s_i$ dependent only on the aggregate sensitivity of the other agents $\bolds \sigma(s)$. This is an important feature of  \autoref{alg:inner_loop}, since the decisions and sensitivity of the single agents is not communicated to the other commuters but is only available to the \gls{TA}, maintaining the local agents' preference private. 
\end{remark}

\begin{figure} [t]
\begin{algorithm}[Hierarchical Traffic Shaping]%
	\label{alg:outer_loop}%
	\textbf{Parameters}:
	Step sizes \( \{ \alpha^k \}_{k \in \setN} \), tolerances \( \{\sigma^k\}_{k \in \setN} \).\\[.2em]
	\textbf{Initialization}: $k\leftarrow 0$, 
	\( {\bolds c}^k \in \mc{D},~ \cy^k \in \setR^n, ~ s^k \in \setR^{m \times n} \).
	\\[.2em]
	\textbf{Iterate until convergence:}\\
	$
	\left \lfloor
	\begin{array}{l}
		\text{\texttt{(TA)}}\\
        [.2em]
		\left|\begin{array}{l}
  \text{Compute inexact hypergradient:} \\[.1em]
		\begin{array}{l}
			\widehat{\nabla} \hat \varphi_{\text{TA}}^k =
			\nabla_1 \varphi_{\text{TA}}(\bolds c^k, \cy^{k}) + (s^k)^{\top} \nabla_2 \varphi_{\text{TA}}({\bolds c}^k, \cy^{k}) \\		
		\end{array}
		 \\ [.45em]
           \text{Update discounts:} \\[.1em]
		\begin{array}{l}
			\bolds c^{k+1} = \proj{\mathcal{D}}[\bolds c^{k} - \alpha^k \widehat{\nabla} \hat\varphi_{\text{TA}}^k] \\
		\end{array}\\[.45em]
        \text{Send \( \bolds c^{k+1}\) to \texttt{(PEVs/FVs)}}
        \end{array}
        \right.\vspace{.45em}\\
		\text{\texttt{(TA+PEVs/FVs)}}
        \\[.2em]
		\left|\begin{array}{l}
        \text{Estimate flows and sensitivity:}\\
		\begin{array}{l}
			(\cy^{k+1}, \ s^{k+1}) = \textbf{Inner Loop} ({\bolds c}^{k+1},\, \cy^k,\, s^k,\, \sigma^k) 
			 \end{array}
    \end{array}
		\right.\vspace{.5em}\\
			k \leftarrow k+1
		\end{array}
		\right.
		$
	\end{algorithm}
\end{figure}
\begin{figure}
	\begin{algorithm}[Inner Loop]%
		\label{alg:inner_loop}%
		\textbf{Parameters}: step size \( \gamma \). \\
		\textbf{Input}:
		\(  \bolds c, \cy, s , \sigma\). \\
		\textbf{Define}: \( h_i(\bolds c,y_i,\csigma(\cy)) \coloneqq \proj{\mc{Y}_i}[y_i - \gamma F(\bolds c , \csigma(\cy))] \) \\
		\textbf{Initialization}:  \(\ell \leftarrow 0,\  \tilde \csigma(\cy^\ell) = \csigma(\cy),\ \csigma(\tilde s^\ell) = \csigma(s),  \) \\
		\( ~~~~~\zeta=0,\, (\forall i \in \agents)\,S_{1,i} = \jac_1 h_i(\bolds c, \tilde y_i^{\ell},\csigma(\tilde\cy^{\ell})),\)\\ 
        \(~~~~~  S_{2,i} =  \jac_2 h_i(\bolds c, \tilde y_i^{\ell},\csigma(\tilde\cy^{\ell})),~  S_{3,i} =  \jac_3 h_i(\bolds c, \tilde y_i^{\ell},\csigma(\tilde\cy^{\ell}))  \) \\
		\textbf{Iterate} \\[0.5em]
		$
		\begin{array}{l}
			\left\lfloor
			\begin{array}{l}
				\text{\texttt{(PEVs/FVs)}  \(\forall i \in \agents \) (in parallel)} \\ 
				\left\lfloor
				\begin{array}{l}
					\text{Update flow profile:} \\
					\begin{array}{l}
						\tilde{y}_i^{\ell+1} = h_i(\bolds c, \tilde y_i^{\ell},\csigma(\tilde\cy^{\ell}))
					\end{array}
					\\[.5em]
					\text{If \(\zeta=1 \):}  \\
                    \left|
					\begin{array}{l}
						S_{1,i} = \jac_1 h_i(\bolds c, \tilde y_i^{\ell},\csigma(\tilde\cy^{\ell})),\\
                        S_{2,i} =  \jac_2 h_i(\bolds c, \tilde y_i^{\ell},\csigma(\tilde\cy^{\ell})),\\
                        S_{3,i} =  \jac_3 h_i(\bolds c, \tilde y_i^{\ell},\csigma(\tilde\cy^{\ell})) 
					\end{array} 
                    \right.\\
					\text{Else:} \\
					\left|
					\begin{array}{l}
						\text{Do not update \( S_{1,i}, S_{2,i}, S_{3,i} \)}
					\end{array}	
                    \right.\\[.5em]
                    \text{Update sensitivity:} \\
					\begin{array}{l}
						\tilde{s}_i^{\ell+1} = 				
						S_{2, i} \tilde{s}_i^\ell+S_{3, i} \csigma(\tilde{s}^\ell) + S_{1, i}\\
					\end{array}
					\vspace*{.5em}
				\end{array}
				\right. \\[1.5em]
                \text{\texttt{(TA) }} \\ 
				\left\lfloor
				\begin{array}{l}
					\text{Gather: $ \tilde{\cy}^{\ell + 1} \coloneqq (y_i^{\ell + 1})_{i \in \agents} ,\, \tilde{s}^{\ell + 1} \coloneqq (\tilde s_i^{\ell + 1})_{i \in \agents} $ } \\[.5em]
					\text{If \( \norm{\tilde{\cy}^{\ell + 1} - \tilde{\cy}^\ell} \geq \sigma \): $\zeta =1$}  \\
					\text{Else: $\zeta =0$}\\[.5em]
                    \text{Broadcast: \( \bolds\sigma(\tilde{\cy}^{\ell + 1}),\, \bolds\sigma(\tilde{s}^{\ell + 1}),\, \zeta \)} 
					\vspace*{.5em}
				\end{array}
				\right. \\[1.5em]
				\ell \leftarrow \ell + 1  \\
				\text{Until } \max \big\{\|\tilde{\cy}^{\ell}\! -\! \tilde{\cy}^{\ell-1} \|, \|\tilde{s}^{\ell}\! -\! \tilde{s}^{\ell-1} \| \big\} \leq \sigma
				%
			\end{array}
			\right. 
		\end{array}	
		$\\[.2em]
		\textbf{Output}: \( \bar \cy= \tilde{\cy}^{\ell}, \bar s= \tilde{s}^{\ell} \).
	\end{algorithm}
\end{figure}

Next, we present the main result of the paper that establishes the convergence of \autoref{alg:outer_loop} to \textit{ a critical point}\footnotemark{} of \eqref{eq:upper_level_substituted}
under appropriate choices of the step sizes \( \{ \alpha^k \}_{k \in \setN}, \gamma \) and tolerance sequence \( \{ \sigma^k \}_{k \in \setN} \).
\footnotetext{Any point \( \bolds c\in \mc{D} \) that satisfies \( 0 \in \conserv \hat{\varphi}_{\text{TA}}(\bolds c) + \ncone_{\mc{D}}(\bolds c) \) is called
a \textit{ critical point} of \eqref{eq:upper_level_substituted},
where \( \ncone_{\mc{D}} \) denotes the normal cone of \( \mc{D} \).
}
\begin{proposition} \label{prop:big_hype_convergence}
	Let \( \{ \alpha^k \}_{k \in \setN} \) be non-negative, non-summable and square-summable,
	let \( \{ \sigma^k \}_{k \in \setN} \) be non-negative and satisfy \( \sum_{k=0}^{\infty} \alpha^k \sigma^k < \infty \),
	and let \( \gamma \) be sufficiently small.
	Then, any limit point of the sequence \( \{ {\bolds c}^k \}_{k \in \setN} \) is a composite critical point of \eqref{eq:upper_level_substituted}.
\end{proposition}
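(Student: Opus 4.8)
The plan is to cast problem~\eqref{eq:upper_level_substituted} into the abstract bilevel framework of \cite{grontas:2023:big_hype} and then verify, one by one, the structural hypotheses under which the convergence theorem therein guarantees that every limit point of \( \{\bolds c^k\}_{k\in\setN} \) is a composite critical point. Rather than re-deriving the convergence machinery, I would invoke it directly once its assumptions are checked for our specific traffic model, so that the substance of the proof lies entirely in this verification.

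The first block of assumptions concerns the lower level. I would show that the \gls{PG} mapping \( F(\bolds c, \csigma(\cdot)) \) is strongly monotone and Lipschitz continuous in \( \cy \), uniformly over \( \bolds c\in\mc{D} \). Strong monotonicity follows from the strictly convex quadratic structure of the travel-time and last-mile terms---the affine latencies~\eqref{eq:travel_time_edge} and the \( W\succ0 \)-weighted norm---and is the very property underpinning \autoref{lemma:existence_and_uniqueness}; Lipschitz continuity is immediate because \( F \) is jointly affine in \( (\bolds c,\cy) \), the charging and parking costs being bilinear. These two facts ensure that the solution map \( \cys(\cdot) \) is single-valued (already established) and globally Lipschitz, and that for \( \gamma \) sufficiently small the inner-loop map \( h_i \) of \autoref{alg:inner_loop} is a contraction, so the fixed-point iteration converges geometrically to \( \cys(\bolds c) \). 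This is exactly where the requirement that \( \gamma \) be small enters.

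The second block concerns the sensitivity and the upper level. Since each \( \mc{Y}_i \) is a polyhedron, the projection \( \proj{\mc{Y}_i} \) is piecewise affine and hence path-differentiable, so the composite map \( h_i \) admits a conservative Jacobian assembled from \( S_{1,i}, S_{2,i}, S_{3,i} \); I would argue that the contraction property carries over to the linearized sensitivity recursion of \autoref{alg:inner_loop}, so that \( \tilde s^\ell \) converges to an element of a conservative Jacobian \( \conserv \cys(\bolds c) \) of the solution map. On the upper level, \( \varphi_{\text{TA}} \) is itself \( C^1 \)---the penalty \( \varphi^{\text{b}} \) is \( C^1 \) because \( \max\{\cdot,0\}^2 \) is continuously differentiable, and the \gls{TTT}~\eqref{eq:g_c} is smooth---so the non-smoothness of \( \hat\varphi_{\text{TA}} \) stems solely from \( \cys(\cdot) \). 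The conservative chain rule then yields a well-defined conservative gradient \( \conserv \hat\varphi_{\text{TA}} \), which the inexact hypergradient \( \widehat\nabla\hat\varphi_{\text{TA}}^k \) approximates with an error governed by the inner-loop tolerance \( \sigma^k \).

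With all hypotheses in place, the convergence theorem of \cite{grontas:2023:big_hype} applies: the outer loop is an inexact projected conservative-gradient method on \( \mc{D} \) whose per-step error is of order \( \sigma^k \). The conditions that \( \{\alpha^k\} \) be non-summable and square-summable, together with \( \sum_k \alpha^k\sigma^k<\infty \), are precisely the standard requirements forcing this inexact scheme to inherit the asymptotic behaviour of its exact counterpart, so that every limit point \( \bolds c \) satisfies \( 0\in\conserv\hat\varphi_{\text{TA}}(\bolds c)+\ncone_{\mc{D}}(\bolds c) \). The main obstacle I anticipate is the second block: proving rigorously that iterating the linearized projected-gradient recursion for \( \tilde s^\ell \) produces a \emph{bona fide} element of the conservative Jacobian of \( \cys(\cdot) \), since conservative Jacobians obey the chain rule only along Lipschitz paths rather than pointwise, and one must ensure that the fixed-point construction respects this weaker calculus.
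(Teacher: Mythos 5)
Your overall strategy coincides with the paper's: both reduce Proposition~\ref{prop:big_hype_convergence} to checking the hypotheses of the convergence theorem in \cite{grontas:2023:big_hype} (their Assumption~1 and Standing Assumptions~1--4) and then invoking that theorem. Your first block matches the paper's verification almost exactly: strong monotonicity of \( F(\bolds c, \cdot) \) from \autoref{lemma:existence_and_uniqueness}, uniform strong monotonicity and Lipschitz continuity from affinity of \( F \) in \( (\bolds c, \cy) \), and polyhedrality of the sets \( \mc{Y}_i \).

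The gap is in your second block, and it is exactly the obstacle you flag yourself. You attempt to re-derive pieces of the internal machinery of \gls{BIG Hype}---contraction of the inner loop, convergence of the sensitivity recursion to an element of a conservative Jacobian of \( \cys(\cdot) \), the conservative chain rule---and you correctly observe that conservative Jacobians obey the chain rule only along Lipschitz paths, so this re-derivation does not close as stated. But none of that machinery needs to be re-proven: it is the content of the cited theorem, and the hypothesis that makes it valid is \emph{definability} (in an o-minimal structure) of the problem data, which is the one assumption you never verify. The paper disposes of it in two lines: \( F \) is affine, hence semialgebraic, hence definable (Standing Assumption~3); \( \varphi_{\text{TA}} \) is semialgebraic (a composition of quadratics and \( \max\{\cdot,0\}^2 \)), hence definable, and \( \mc{D} \) is a convex compact box (Standing Assumption~4). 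Definability is precisely what guarantees that \( \cys(\cdot) \) and \( \hat{\varphi}_{\text{TA}} \) admit conservative Jacobians/gradients compatible with the chain rule, and that the inner sensitivity scheme converges to a \emph{bona fide} element of \( \conserv \cys(\bolds c) \); once you verify it, your anticipated obstacle evaporates and the inclusion \( 0 \in \conserv \hat{\varphi}_{\text{TA}}(\bolds c) + \ncone_{\mc{D}}(\bolds c) \) at every limit point follows directly from the theorem. You also leave implicit one hypothesis the paper checks explicitly: that the \( f_i \)'s are convex quadratics of the specific parametric form required by \cite{grontas:2023:big_hype} (their Assumption~1), which is what licenses the particular inner-loop update used in \autoref{alg:inner_loop}.
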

\begin{proof}
	See Appendix \ref{proof:big_hype_convergence}.
\end{proof}

Any local minimum of \( \hat{\varphi}_{\text{TA}} \) is a composite critical point \cite[Prop.\ 1]{conservative_definition}.
However, the set of composite critical points can also include spurious points, e.g., saddles or local maxima,
but in our numerical experience these are rarely encountered.
This concludes the convergence analysis showing that employing the iterative algorithm in \autoref{alg:outer_loop} and \autoref{alg:inner_loop} the \gls{TA} is able to compute a (locally) optimal set of discounts $\bolds c^\star$, that minimizes the \gls{TTT} by influencing the commuters routing.
\section{Numerical Simulations}\label{sec:sims}
\subsection{Simulation Setup}
We deploy our proposed traffic-shaping scheme on the Anaheim city dataset \cite{transportation_datasets}.
For demonstration purposes, we only consider a sub-network of Anaheim consisting of 50 nodes and 118 edges
presented in \autoref{fig:graph}, 
which includes some additional edges to ensure path connectivity of the sub-network.
\begin{figure}[t]
	\centering
	\includegraphics[width=\linewidth]{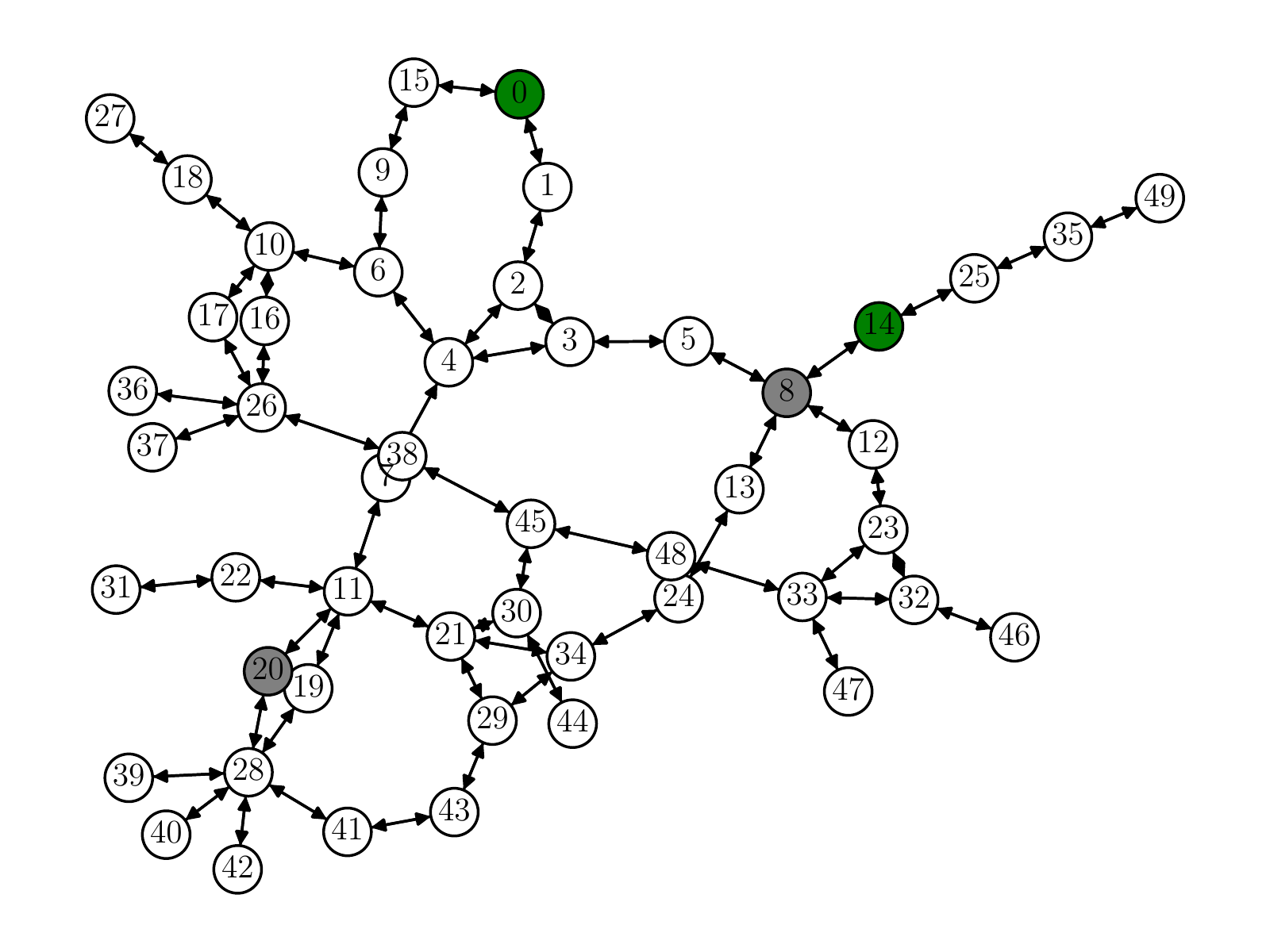}
	\caption{Graph of Anaheim's sub-network where charging and parking stations are represented as green and grey nodes, respectively.}
	\label{fig:graph}
\end{figure}

The parameters \( a_{\varepsilon}, b_{\varepsilon}, h_{\varepsilon} \) are derived from Anaheim's original non-linear
data via linearization as in \cite{understanding_the_impact}.
We let \( \eta_i =  30\,[\$/\text{hr}]\) for all agents, 
while \( q_i \) is drawn uniformly at random in the interval \( [20, 60] \).
We assume the presence of two charging stations located at nodes $0$ and $14$, hence $\charge=\{0,14\}$. The associated prices are set to $\overline{c}_0^{\tup{c}}=0.35\,[\$/\text{kWh}]$  and $\overline{c}_{14}^{\tup{c}}=0.3\,[\$/\text{kWh}]$. Similarly, 
parking are located in nodes $8$ and $20$, hence $\park=\{8,20\}$, and the prices are $\overline{c}_{8}^{\tup{p}}=17\,\$$ and $\overline{c}_{20}^{\tup{p}}=20\,\$$.
The provided discounts are restricted to \( c_i^{\tup c,j} \in [0, 0.2] \), for all \( i \in \agents, j \in \charge\),
whereas \( c_i^{\tup p,j} \in [0, 5] \), for all \( i \in \agents, j \in \park \). The access rate of class $i$ to facility $j$ is set to $\delta_i^{c,j} = 0.75 \frac{P_i}{n_{\tup c}} $ and $\delta_i^{p,j} = 0.75 \frac{P_i}{n_{\tup p}}$ for all $i \in \mathcal{N}, j \in \mc C$.

We consider $20$ classes of \gls{PEV}s and $20$ classes of \gls{FV}s that amount
to $5\%$ and $15\%$ of the total vehicles for their respective type, ensuring a policy penetration rate of $20\%$. Given the total vehicles $n_{\text{veh}} = 242584$ on the network, each  class size is computed as $P_i = \rho_i\frac{n_{\text{veh}}}{20}$, where $\rho_i = 0.05$ if $i$ is composed of \glspl{PEV} and $\rho_i = 0.20$ otherwise. For each \glspl{PEV} class, the minimum fraction of vehicles that need to charge, i.e., $\bar{g}_i^c$, is randomly selected.

\subsection{Parametric Budget}
We investigate the \gls{TTT} reduction attained by our algorithm as a function
of the available budget.
We consider both personalized and uniform discounts, 
where the latter correspond to providing the same discount to all agents that access a particular facility, i.e., $\mc D$ is endowed for every $j\in\mc C$ with the additional constraints $c_i^{\tup c,j}=c_\ell^{\tup c,j}$ and $c_i^{\tup p,j}=c_\ell^{\tup p,j}$ for all $i,\ell\in\agents$.
In \autoref{fig:budget}, we present the \gls{TTT} reduction as a percentage of the \gls{TTT}
without intervention.
\begin{figure}
	\centering
	\includegraphics[width=0.8 \linewidth]{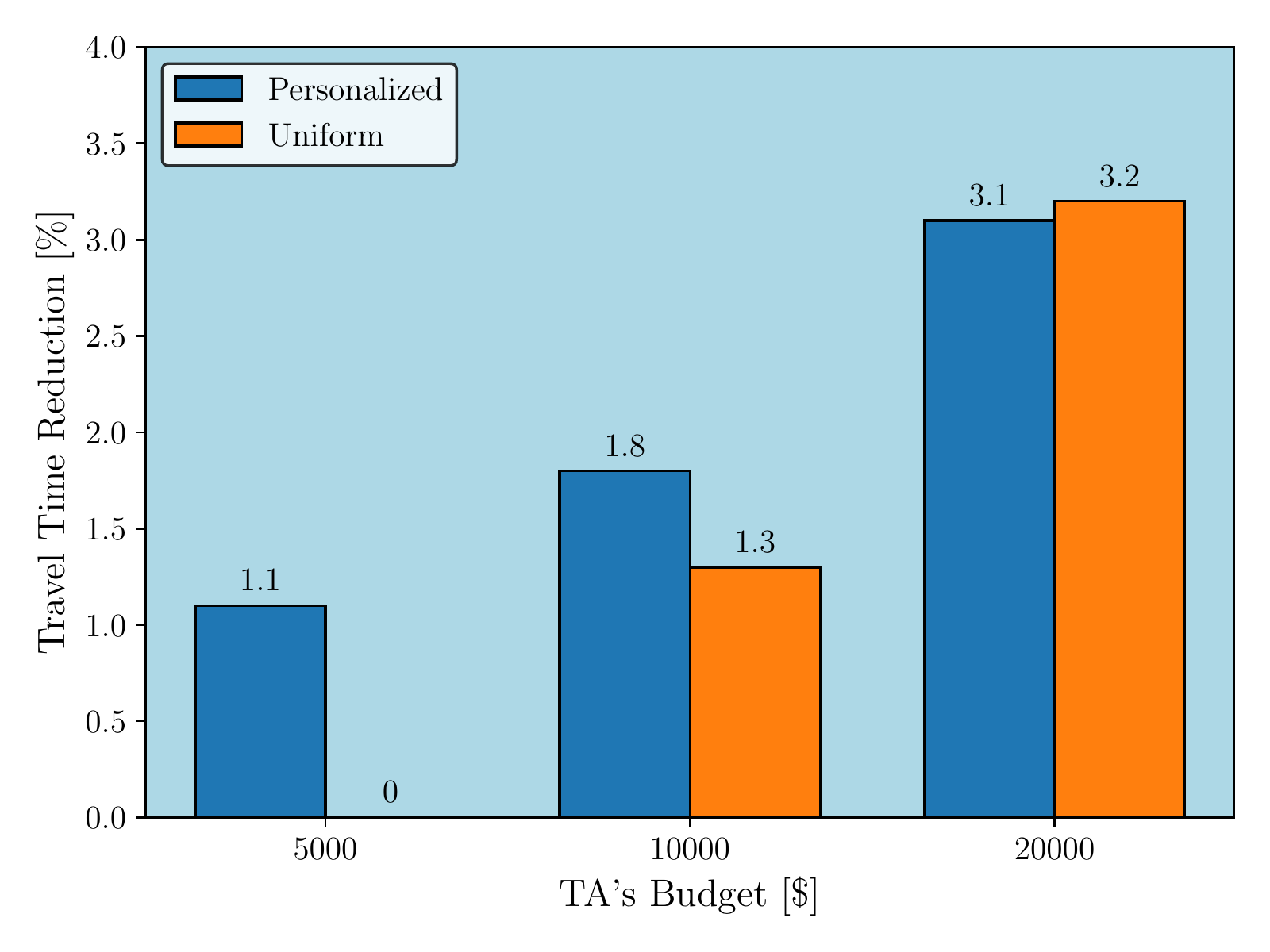}
	\caption{Percentage of \gls{TTT} reduction for different budgets, using personalized and uniform discounts.}
	\label{fig:budget}
\end{figure}
We observe that for large budgets both personalized and uniform incentives perform similarly,
and are able to provide sensible decongestion of the network, around $3\%$. The magnitude of the result is inline with those obtain in other works considering incentives. For example, in an experiment performed in Lee county almost $17\text{M}\,\$$ has been put in place to achieve a traffic reduction of around $5\%$ see \cite{burris:1998:variable_pricing}.  

In our case, as the available budget decreases, personalized incentives outperform uniform
ones due the more efficient and targeted allocation of resources. As an example to showcase the effect of the \gls{TA}'s discounting policy on the routing game, we report in \autoref{fig:flow_diff} the difference in the flow of \textit{controllable} vehicles, i.e., \gls{PEV} and \gls{FV}, with and without \gls{TA} intervention, considering a budget of 5000~\$. In this specific setting, the \gls{TA} tends to offer discounts in the facility placed at node 0 to convey more vehicles towards that node and decongesting areas around remaining facilities, resulting in a decrement of the \gls{TTT} of $76\,\text{h}$ every day.

\begin{figure}
	\centering
	\includegraphics[trim={0cm 0 0cm 0},clip,width=\linewidth]{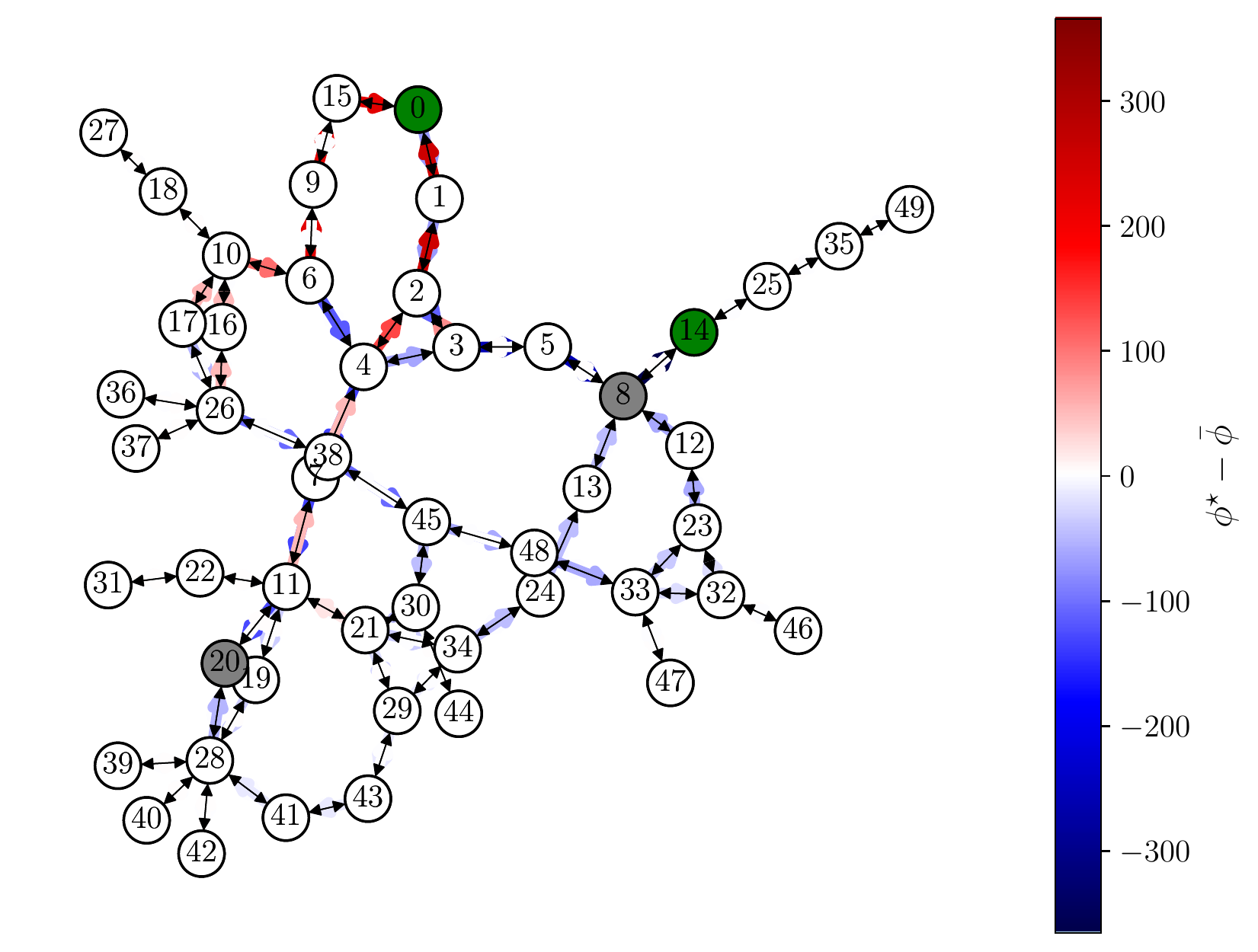}
	\caption{Difference in the flow of \gls{PEV} and \gls{FV} with \gls{TA} intervention (i.e., \gls{BIG Hype}) and without \gls{TA} intervention (i.e., \gls{NE} under no discounts).}
	\label{fig:flow_diff}
\end{figure}

\subsection{Scalability}
Next, we explore the scalability of our proposed scheme by considering
sub-networks of Anaheim of increasing size, starting form $n_{{v}}=50$ to $n_{{v}}=400$.
We consider the computational cost of the \gls{TA}'s updates, in the outer loop,
and the agents' updates, in the inner loop.
Specifically, assuming a distributed implementation, the inner loop cost corresponds to
the maximum computation time among all the agents.
In \autoref{fig:scalability}, we present both computational costs as a function of the number of nodes in the network.
\begin{figure}
	\centering
	\includegraphics[width=1 \linewidth]{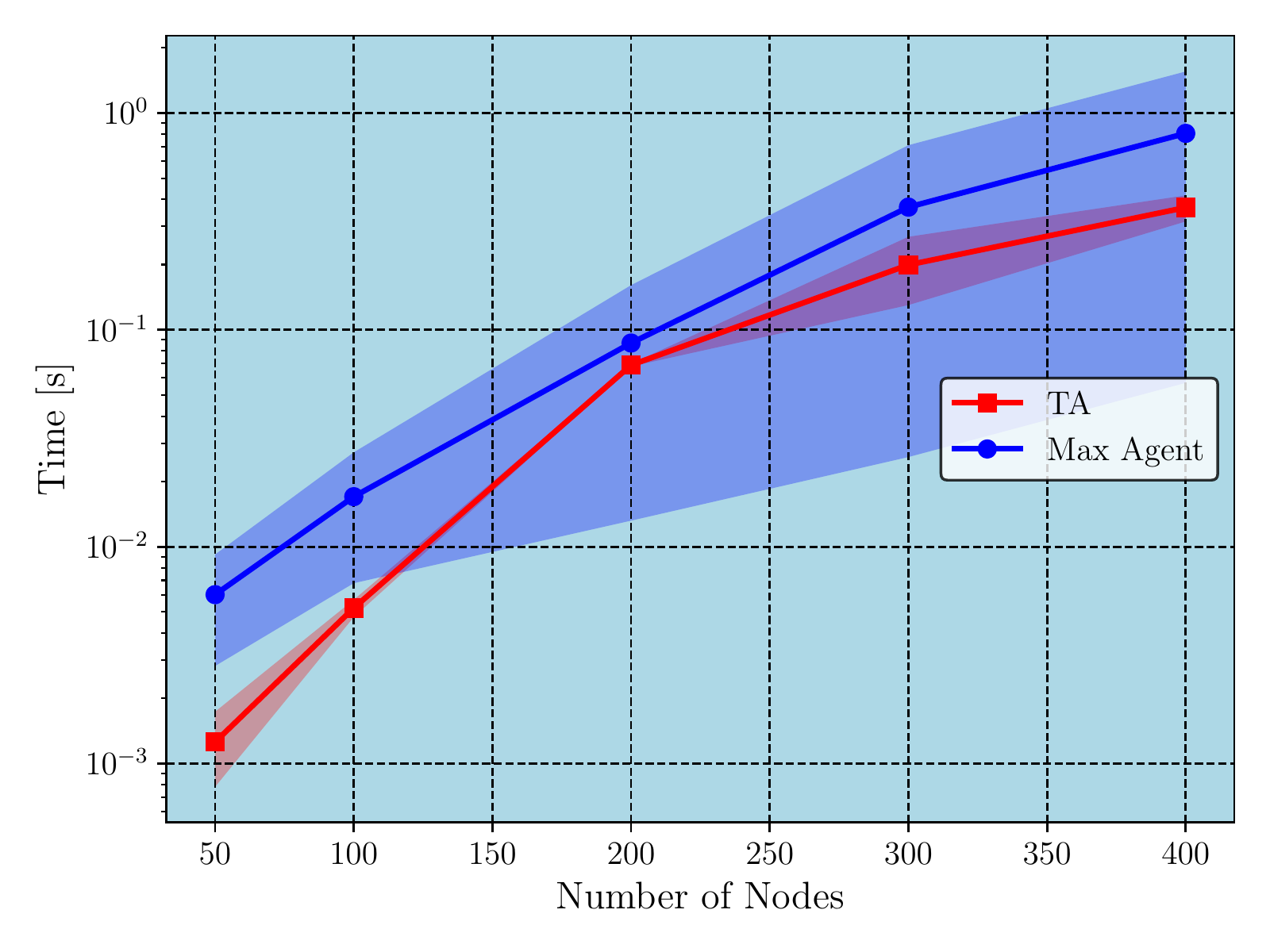}
	\caption{Computation time for inner and outer loop for increasing network size.
					The solid lines represent the average and the shaded area represents \( \pm 1 \) standard deviation over 100 iterations.}
	\label{fig:scalability}
\end{figure}

To asses the overall computation time, we highlight that
the number of outer loop iterations required for convergence is typically 
in the order of hundreds.
Moreover, we can employ large tolerances for the inner loop iteration as in \cite[Section V]{grontas:2023:big_hype}, 
which allow running few inner iterations (usually only one) for each outer loop iteration. 
\section{Conclusion}
Smart incentives on the price of energy at service stations and on the price for accessing parking facilities are a viable option for the \gls{TA} to influence commuters daily routing and promote traffic decongestion. Compared to tolling the effect is limited due to the voluntary nature of such policies and their indirect effect on the \gls{VRP}. To compute the optimal set of discounts \gls{BIG Hype} produces highly scalable solutions that can be applied to networks of great dimension.

The proposed model can be extended in many directions, for example the \gls{TA} can be endowed  with the ability to impose tolls over the networks' roads. This extended model would effectively encompass the restricted network tolling problem making it a compelling, yet highly complex, problem to address. Furthermore, more extensive simulations can be carried out to assess the performance of the proposed algorithm for bigger networks and at the variation of the number of service stations and parking lots.


\bibliographystyle{IEEEtran}
\bibliography{references}

\appendices
\section*{Appendix}
\renewcommand\thesubsection{\Alph{subsection}}
\subsection{Proof of \autoref{lemma:existence_and_uniqueness}}
\label{proof:existence_and_uniqueness}
A sufficient condition for existence and uniqueness of a \gls{NE} is that the \gls{PG} \( F(\bolds c, \cdot) \)
is strongly monotone \cite[Th.\ 2.3.3(b)]{facchinei2003finite}.
Since \( F(\bolds c, \cdot) \) is an affine mapping, strong monotonicity
is equivalent to \( \jac F(\bolds c, \cdot) \) being positive definite \cite[Th.~2.3.2(c)]{facchinei2003finite}.

Observe that the functions \( f_i^{\text{t}} \) and \( f_i^{\text{c}},\, f_i^{\text{p}},\,f_i^{\text{lm}} \) depend only
on \( \bolds{\phi} \) and \( g_i^{\tup c},\,g_i^{\tup p} \), respectively.
Therefore, after applying an appropriate permutation we can express \( \jac F(\bolds c, \cdot) \)
as \( M = \diag(M_{\phi}, M_g) \),
where \( M_{\phi} \coloneqq \jac_{\phi} F(\bolds c, \cdot) \) and \( M_g \coloneqq \jac_{g} F(\bolds c, \cdot) \).
In the proof of \cite[Lem. 1]{decentralized_joint_routing_planning} it is shown \( M_y \succ 0 \),
provided that \( t_{\varepsilon}(\cdot) \) is an affine function.
Moreover, \( M_{g} \succ 0 \) because \( f_i^{\text{c}} + f_i^{\text{p}} + f_i^{\text{lm}} \) is a strongly convex quadratic
that depends only on \( g_i \), for all \( i \in \agents \).
The fact the both \( M_{\phi} \) and \( M_{g} \) are positive definite implies that \( M \succ 0 \),
hence completing the proof.
\( \hfill \square \)

\subsection{Proof of \autoref{prop:big_hype_convergence}}
\label{proof:big_hype_convergence}
To prove the claim, we will verify that Assumption 1 and Standing Assumptions 1\( - \)4 in \cite{grontas:2023:big_hype} are satisfied by our
model and then invoke \cite[Th.~2]{grontas:2023:big_hype}.
Notice that the \( f_i \)'s are convex quadratic in the form of \cite[Eq.~17]{grontas:2023:big_hype}, hence Assumption 1 holds.
Further, Standing Assumption 1 holds true since the feasible sets \( \mc{Y}_i \) are polyhedral.
For Standing Assumption 2, we showed in the proof of \autoref{lemma:existence_and_uniqueness} that \( F(\bolds{c}, \cdot) \) is strongly monotone
for any \( \bolds{c} \).
Uniform strong monotonicity and Lipschitz continuity follows from the fact the \( F \) is affine in \( (\bolds{c}, \cy) \).
Standing Assumption 3 holds since \( F \) is affine which implies that it is semialgebraic and, thus, definable.
Finally, note that \( \varphi_{\text{TA}} \) is semialgebraic, which implies definable, 
and \( \mc{D} \) is convex and compact, hence, verifying Standing Assumption 4.
\( \hfill \square \)

\end{document}